\newtheorem{theorem}{Theorem}[section]
\newtheorem{conjecture}[theorem]{Conjecture}
\newtheorem{corollary}[theorem]{Corollary}
\newtheorem{lemma}[theorem]{Lemma}
\newtheorem{proposition}[theorem]{Proposition}
\theoremstyle{definition}
\newtheorem{definition}[theorem]{Definition}
\newtheorem{example}[theorem]{Example}
\newtheorem{remark}[theorem]{Remark}
\newtheorem*{da}{Data availability}
\renewenvironment{proof}[1][Proof]{\noindent\textbf{#1.} }{\ \rule{0.5em}{0.5em}}
\newenvironment{acknowledgement}{\smallskip{\sc Acknowledgement.}\rm}{\smallskip}
\renewcommand{\theequation}{\thesection.\arabic{equation}}
\let\pdfoutput=\undefined\fi
\chardef\@x10\chardef\@xv60
\def\tcitime{
\def\@time{%
  \@minute\time\@hour\@minute\divide\@hour\@xv
  \ifnum\@hour<\@x 0\fi\the\@hour:%
  \multiply\@hour\@xv\advance\@minute-\@hour
  \ifnum\@minute<\@x 0\fi\the\@minute
  }}%
\def\x@hyperref#1#2#3{%
   \catcode`\~ = 12
   \catcode`\$ = 12
   \catcode`\_ = 12
   \catcode`\# = 12
   \catcode`\& = 12
   \catcode`\% = 12
   \y@hyperref{#1}{#2}{#3}%
}
\def\y@hyperref#1#2#3#4{%
   #2\ref{#4}#3
   \catcode`\~ = 13
   \catcode`\$ = 3
   \catcode`\_ = 8
   \catcode`\# = 6
   \catcode`\& = 4
   \catcode`\% = 14
}
\def\QCTOpt[#1]#2{%
  \def\QCTOptB{#1}
  \def\QCTOptA{#2}
}
\def\QCTNOpt#1{%
  \def\QCTOptA{#1}
  \let\QCTOptB\empty
}
\def\Qct{%
  \@ifnextchar[{%
    \QCTOpt}{\QCTNOpt}
}
\def\QCBOpt[#1]#2{%
  \def\QCBOptB{#1}%
  \def\QCBOptA{#2}%
}
\def\QCBNOpt#1{%
  \def\QCBOptA{#1}%
  \let\QCBOptB\empty
}
\def\Qcb{%
  \@ifnextchar[{%
    \QCBOpt}{\QCBNOpt}%
}
\def\PrepCapArgs{%
  \ifx\QCBOptA\empty
    \ifx\QCTOptA\empty
      {}%
    \else
      \ifx\QCTOptB\empty
        {\QCTOptA}%
      \else
        [\QCTOptB]{\QCTOptA}%
      \fi
    \fi
  \else
    \ifx\QCBOptA\empty
      {}%
    \else
      \ifx\QCBOptB\empty
        {\QCBOptA}%
      \else
        [\QCBOptB]{\QCBOptA}%
      \fi
    \fi
  \fi
}
\def\GRAPHICSPS#1{%
 \ifcase\GRAPHICSTYPE
   \special{ps: #1}%
 \or
   \special{language "PS", include "#1"}%
 \fi
}%
\def\graffile#1#2#3#4{%
    \bgroup
	   \@inlabelfalse
       \leavevmode
       \@ifundefined{bbl@deactivate}{\def~{\string~}}{\activesoff}%
        \raise -#4 \BOXTHEFRAME{%
           \hbox to #2{\raise #3\hbox to #2{\null #1\hfil}}}%
    \egroup
}%
\def\draftbox#1#2#3#4{%
 \leavevmode\raise -#4 \hbox{%
  \frame{\rlap{\protect\tiny #1}\hbox to #2%
   {\vrule height#3 width\z@ depth\z@\hfil}%
  }%
 }%
}%
\let\nographics=\@msidraft
\newif\ifwasdraft
\def\GRAPHIC#1#2#3#4#5{%
   \ifnum\@msidraft=\@ne\draftbox{#2}{#3}{#4}{#5}%
   \else\graffile{#1}{#3}{#4}{#5}%
   \fi
}
\def\addtoLaTeXparams#1{%
    \edef\LaTeXparams{\LaTeXparams #1}}%
\newif\ifBoxFrame \BoxFramefalse
\newif\ifOverFrame \OverFramefalse
\newif\ifUnderFrame \UnderFramefalse
\def\BOXTHEFRAME#1{%
   \hbox{%
      \ifBoxFrame
         \frame{#1}%
      \else
         {#1}%
      \fi
   }%
}
\def\doFRAMEparams#1{\BoxFramefalse\OverFramefalse\UnderFramefalse\readFRAMEparams#1\end}%
\def\readFRAMEparams#1{%
 \ifx#1\end%
  \let\next=\relax
  \else
  \ifx#1i\dispkind=\z@\fi
  \ifx#1d\dispkind=\@ne\fi
  \ifx#1f\dispkind=\tw@\fi
  \ifx#1t\addtoLaTeXparams{t}\fi
  \ifx#1b\addtoLaTeXparams{b}\fi
  \ifx#1p\addtoLaTeXparams{p}\fi
  \ifx#1h\addtoLaTeXparams{h}\fi
  \ifx#1X\BoxFrametrue\fi
  \ifx#1O\OverFrametrue\fi
  \ifx#1U\UnderFrametrue\fi
  \ifx#1w
    \ifnum\@msidraft=1\wasdrafttrue\else\wasdraftfalse\fi
    \@msidraft=\@ne
  \fi
  \let\next=\readFRAMEparams
  \fi
 \next
 }%
\def\IFRAME#1#2#3#4#5#6{%
      \bgroup
      \let\QCTOptA\empty
      \let\QCTOptB\empty
      \let\QCBOptA\empty
      \let\QCBOptB\empty
      #6%
      \parindent=0pt
      \leftskip=0pt
      \rightskip=0pt
      \setbox0=\hbox{\QCBOptA}%
      \@tempdima=#1\relax
      \ifOverFrame
          \typeout{This is not implemented yet}%
          \show\HELP
      \else
         \ifdim\wd0>\@tempdima
            \advance\@tempdima by \@tempdima
            \ifdim\wd0 >\@tempdima
               \setbox1 =\vbox{%
                  \unskip\hbox to \@tempdima{\hfill\GRAPHIC{#5}{#4}{#1}{#2}{#3}\hfill}%
                  \unskip\hbox to \@tempdima{\parbox[b]{\@tempdima}{\QCBOptA}}%
               }%
               \wd1=\@tempdima
            \else
               \textwidth=\wd0
               \setbox1 =\vbox{%
                 \noindent\hbox to \wd0{\hfill\GRAPHIC{#5}{#4}{#1}{#2}{#3}\hfill}\\%
                 \noindent\hbox{\QCBOptA}%
               }%
               \wd1=\wd0
            \fi
         \else
            \ifdim\wd0>0pt
              \hsize=\@tempdima
              \setbox1=\vbox{%
                \unskip\GRAPHIC{#5}{#4}{#1}{#2}{0pt}%
                \break
                \unskip\hbox to \@tempdima{\hfill \QCBOptA\hfill}%
              }%
              \wd1=\@tempdima
           \else
              \hsize=\@tempdima
              \setbox1=\vbox{%
                \unskip\GRAPHIC{#5}{#4}{#1}{#2}{0pt}%
              }%
              \wd1=\@tempdima
           \fi
         \fi
         \@tempdimb=\ht1
         \advance\@tempdimb by -#2
         \advance\@tempdimb by #3
         \leavevmode
         \raise -\@tempdimb \hbox{\box1}%
      \fi
      \egroup%
}%
\def\DFRAME#1#2#3#4#5{%
  \vspace\topsep
  \hfil\break
  \bgroup
     \leftskip\@flushglue
	 \rightskip\@flushglue
	 \parindent\z@
	 \parfillskip\z@skip
     \let\QCTOptA\empty
     \let\QCTOptB\empty
     \let\QCBOptA\empty
     \let\QCBOptB\empty
	 \vbox\bgroup
        \ifOverFrame 
           #5\QCTOptA\par
        \fi
        \GRAPHIC{#4}{#3}{#1}{#2}{\z@}%
        \ifUnderFrame 
           \break#5\QCBOptA
        \fi
	 \egroup
  \egroup
  \vspace\topsep
  \break
}%
\def\FFRAME#1#2#3#4#5#6#7{%
  \@ifundefined{floatstyle}
    {
     \begin{figure}[#1]%
    }
    {
	 \ifx#1h
      \begin{figure}[H]%
	 \else
      \begin{figure}[#1]%
	 \fi
	}
  \let\QCTOptA\empty
  \let\QCTOptB\empty
  \let\QCBOptA\empty
  \let\QCBOptB\empty
  \ifOverFrame
    #4
    \ifx\QCTOptA\empty
    \else
      \ifx\QCTOptB\empty
        \caption{\QCTOptA}%
      \else
        \caption[\QCTOptB]{\QCTOptA}%
      \fi
    \fi
    \ifUnderFrame\else
      \label{#5}%
    \fi
  \else
    \UnderFrametrue%
  \fi
  \begin{center}\GRAPHIC{#7}{#6}{#2}{#3}{\z@}\end{center}%
  \ifUnderFrame
    #4
    \ifx\QCBOptA\empty
      \caption{}%
    \else
      \ifx\QCBOptB\empty
        \caption{\QCBOptA}%
      \else
        \caption[\QCBOptB]{\QCBOptA}%
      \fi
    \fi
    \label{#5}%
  \fi
  \end{figure}%
 }%
\def\makeactives{
  \catcode`\"=\active
  \catcode`\;=\active
  \catcode`\:=\active
  \catcode`\'=\active
  \catcode`\~=\active
}
   \gdef\activesoff{%
      \def"{\string"}%
      \def;{\string;}%
      \def:{\string:}%
      \def'{\string'}%
      \def~{\string~}%
    }
\def\FRAME#1#2#3#4#5#6#7#8{%
 \bgroup
 \ifnum\@msidraft=\@ne
   \wasdrafttrue
 \else
   \wasdraftfalse%
 \fi
 \def\LaTeXparams{}%
 \dispkind=\z@
 \def\LaTeXparams{}%
 \doFRAMEparams{#1}%
 \ifnum\dispkind=\z@\IFRAME{#2}{#3}{#4}{#7}{#8}{#5}\else
  \ifnum\dispkind=\@ne\DFRAME{#2}{#3}{#7}{#8}{#5}\else
   \ifnum\dispkind=\tw@
    \edef\@tempa{\noexpand\FFRAME{\LaTeXparams}}%
    \@tempa{#2}{#3}{#5}{#6}{#7}{#8}%
    \fi
   \fi
  \fi
  \ifwasdraft\@msidraft=1\else\@msidraft=0\fi{}%
  \egroup
 }%
\def\TEXUX#1{"texux"}
\def\func#1{\mathop{\rm #1}\nolimits}%
\long\def\QQQ#1#2{%
     \long\expandafter\def\csname#1\endcsname{#2}}%
\long\def\QQA#1#2{}%
\def\QTR#1#2{{\csname#1\endcsname {#2}}}%
\def\EXPAND#1[#2]#3{}%
\def\NOEXPAND#1[#2]#3{}%
\def\LaTeXparent#1{}%
\def\ChildStyles#1{}%
\def\ChildDefaults#1{}%
\def\QTagDef#1#2#3{}%
  \providecommand{\UNICODE}[2][]{\protect\rule{.1in}{.1in}}
  \providecommand{\U}[1]{\protect\rule{.1in}{.1in}}
\def\QQfnmark#1{\footnotemark}
 \def\abstract{%
  \if@twocolumn
   \section*{Abstract (Not appropriate in this style!)}%
   \else \small 
   \begin{center}{\bf Abstract\vspace{-.5em}\vspace{\z@}}\end{center}%
   \quotation 
   \fi
  }%
   \def\registered{\relax\ifmmode{}\r@gistered
                    \else$\m@th\r@gistered$\fi}%
 \def\r@gistered{^{\ooalign
  {\hfil\raise.07ex\hbox{$\scriptstyle\rm\text{R}$}\hfil\crcr
  \mathhexbox20D}}}}{}%
\newdimen\theight
\def\newfmtname{LaTeX2e}
  \DeclareOldFontCommand{\rm}{\normalfont\rmfamily}{\mathrm}
  \DeclareOldFontCommand{\sf}{\normalfont\sffamily}{\mathsf}
  \DeclareOldFontCommand{\tt}{\normalfont\ttfamily}{\mathtt}
  \DeclareOldFontCommand{\bf}{\normalfont\bfseries}{\mathbf}
  \DeclareOldFontCommand{\it}{\normalfont\itshape}{\mathit}
  \DeclareOldFontCommand{\sl}{\normalfont\slshape}{\@nomath\sl}
  \DeclareOldFontCommand{\sc}{\normalfont\scshape}{\@nomath\sc}
\def\alpha{{\Greekmath 010B}}%
\def\beta{{\Greekmath 010C}}%
\def\gamma{{\Greekmath 010D}}%
\def\delta{{\Greekmath 010E}}%
\def\epsilon{{\Greekmath 010F}}%
\def\zeta{{\Greekmath 0110}}%
\def\eta{{\Greekmath 0111}}%
\def\theta{{\Greekmath 0112}}%
\def\iota{{\Greekmath 0113}}%
\def\kappa{{\Greekmath 0114}}%
\def\lambda{{\Greekmath 0115}}%
\def\mu{{\Greekmath 0116}}%
\def\nu{{\Greekmath 0117}}%
\def\xi{{\Greekmath 0118}}%
\def\pi{{\Greekmath 0119}}%
\def\rho{{\Greekmath 011A}}%
\def\sigma{{\Greekmath 011B}}%
\def\tau{{\Greekmath 011C}}%
\def\upsilon{{\Greekmath 011D}}%
\def\phi{{\Greekmath 011E}}%
\def\chi{{\Greekmath 011F}}%
\def\psi{{\Greekmath 0120}}%
\def\omega{{\Greekmath 0121}}%
\def\varepsilon{{\Greekmath 0122}}%
\def\vartheta{{\Greekmath 0123}}%
\def\varpi{{\Greekmath 0124}}%
\def\varrho{{\Greekmath 0125}}%
\def\varsigma{{\Greekmath 0126}}%
\def\varphi{{\Greekmath 0127}}%
\def\nabla{{\Greekmath 0272}}
\def\FindBoldGroup{%
   {\setbox0=\hbox{$\mathbf{x\global\edef\theboldgroup{\the\mathgroup}}$}}%
}
\def\Greekmath#1#2#3#4{%
    \if@compatibility
        \ifnum\mathgroup=\symbold
           \mathchoice{\mbox{\boldmath$\displaystyle\mathchar"#1#2#3#4$}}%
                      {\mbox{\boldmath$\textstyle\mathchar"#1#2#3#4$}}%
                      {\mbox{\boldmath$\scriptstyle\mathchar"#1#2#3#4$}}%
                      {\mbox{\boldmath$\scriptscriptstyle\mathchar"#1#2#3#4$}}%
        \else
           \mathchar"#1#2#3#4%
        \fi 
    \else 
        \FindBoldGroup
        \ifnum\mathgroup=\theboldgroup 
           \mathchoice{\mbox{\boldmath$\displaystyle\mathchar"#1#2#3#4$}}%
                      {\mbox{\boldmath$\textstyle\mathchar"#1#2#3#4$}}%
                      {\mbox{\boldmath$\scriptstyle\mathchar"#1#2#3#4$}}%
                      {\mbox{\boldmath$\scriptscriptstyle\mathchar"#1#2#3#4$}}%
        \else
           \mathchar"#1#2#3#4%
        \fi     	    
	  \fi}
\newif\ifGreekBold  \GreekBoldfalse
\let\SAVEPBF=\pbf
\def\pbf{\GreekBoldtrue\SAVEPBF}%
  \newcounter{equationnumber}  
  \def\mathletters{%
     \addtocounter{equation}{1}
     \edef\@currentlabel{\theequation}%
     \setcounter{equationnumber}{\c@equation}
     \setcounter{equation}{0}%
     \edef\theequation{\@currentlabel\noexpand\alph{equation}}%
  }
    \def\BibTeX{{\rm B\kern-.05em{\sc i\kern-.025em b}\kern-.08em
                 T\kern-.1667em\lower.7ex\hbox{E}\kern-.125emX}}}{}%
\def\AmS{{\protect\usefont{OMS}{cmsy}{m}{n}%
                A\kern-.1667em\lower.5ex\hbox{M}\kern-.125emS}}}{}%
\def\@@eqncr{\let\@tempa\relax
    \ifcase\@eqcnt \def\@tempa{& & &}\or \def\@tempa{& &}%
      \else \def\@tempa{&}\fi
     \@tempa
     \if@eqnsw
        \iftag@
           \@taggnum
        \else
           \@eqnnum\stepcounter{equation}%
        \fi
     \fi
     \global\tag@false
     \global\@eqnswtrue
     \global\@eqcnt\z@\cr}
\def\TCItag{\@ifnextchar*{\@TCItagstar}{\@TCItag}}
\def\@TCItag#1{%
    \global\tag@true
    \global\def\@taggnum{(#1)}%
    \global\def\@currentlabel{#1}}
\def\@TCItagstar*#1{%
    \global\tag@true
    \global\def\@taggnum{#1}%
    \global\def\@currentlabel{#1}}
\def\tint{\msi@int\textstyle\int}%
\def\tiint{\msi@int\textstyle\iint}%
\def\tiiint{\msi@int\textstyle\iiint}%
\def\tiiiint{\msi@int\textstyle\iiiint}%
\def\tidotsint{\msi@int\textstyle\idotsint}%
\def\toint{\msi@int\textstyle\oint}%
\newtoks\temptoksa
\newtoks\temptoksb
\newtoks\temptoksc
\def\msi@int#1#2{%
 \def\@temp{{#1#2\the\temptoksc_{\the\temptoksa}^{\the\temptoksb}}}%
 \futurelet\@nextcs
 \@int
}
\def\@int{%
   \ifx\@nextcs\limits
      \typeout{Found limits}%
      \temptoksc={\limits}%
	  \let\@next\@intgobble%
   \else\ifx\@nextcs\nolimits
      \typeout{Found nolimits}%
      \temptoksc={\nolimits}%
	  \let\@next\@intgobble%
   \else
      \typeout{Did not find limits or no limits}%
      \temptoksc={}%
      \let\@next\msi@limits%
   \fi\fi
   \@next   
}%
\def\@intgobble#1{%
   \typeout{arg is #1}%
   \msi@limits
}
\def\msi@limits{%
   \temptoksa={}%
   \temptoksb={}%
   \@ifnextchar_{\@limitsa}{\@limitsb}%
}
\def\@limitsa_#1{%
   \temptoksa={#1}%
   \@ifnextchar^{\@limitsc}{\@temp}%
}
\def\@limitsb{%
   \@ifnextchar^{\@limitsc}{\@temp}%
}
\def\@limitsc^#1{%
   \temptoksb={#1}%
   \@ifnextchar_{\@limitsd}{\@temp}%
}
\def\@limitsd_#1{%
   \temptoksa={#1}%
   \@temp
}
\def\dint{\msi@int\displaystyle\int}%
\def\diint{\msi@int\displaystyle\iint}%
\def\diiint{\msi@int\displaystyle\iiint}%
\def\diiiint{\msi@int\displaystyle\iiiint}%
\def\didotsint{\msi@int\displaystyle\idotsint}%
\def\doint{\msi@int\displaystyle\oint}%
\def\GRAPHIC#1#2#3#4#5{%
   \ifnum\@msidraft=\@ne\draftbox{#2}{#3}{#4}{#5}%
   \else\graffile{#2}{#3}{#4}{#5}
   \fi
}
\def\graffile#1#2#3#4{\includegraphics[width=#2,height=#3]{#1}}
\def\ExitTCILatex{\makeatother }
\if@compatibility\message{amsmath already loaded}\fi\aftergroup\ExitTCILatex}
\if@compatibility\message{amstex already loaded}\fi\aftergroup\ExitTCILatex}
\if@compatibility\message{amsgen already loaded}\fi\aftergroup\ExitTCILatex}
\let\DOTSI\relax
\def\RIfM@{\relax\ifmmode}%
\def\FN@{\futurelet\next}%
\def\iint{\DOTSI\intno@\tw@\FN@\ints@}%
\def\iiint{\DOTSI\intno@\thr@@\FN@\ints@}%
\def\iiiint{\DOTSI\intno@4 \FN@\ints@}%
\def\idotsint{\DOTSI\intno@\z@\FN@\ints@}%
\def\ints@{\findlimits@\ints@@}%
\newif\iflimtoken@
\newif\iflimits@
\def\findlimits@{\limtoken@true\ifx\next\limits\limits@true
 \else\ifx\next\nolimits\limits@false\else
 \limtoken@false\ifx\ilimits@\nolimits\limits@false\else
 \ifinner\limits@false\else\limits@true\fi\fi\fi\fi}%
\def\multint@{\int\ifnum\intno@=\z@\intdots@                          
 \else\intkern@\fi                                                    
 \ifnum\intno@>\tw@\int\intkern@\fi                                   
 \ifnum\intno@>\thr@@\int\intkern@\fi                                 
 \int}
\def\multintlimits@{\intop\ifnum\intno@=\z@\intdots@\else\intkern@\fi
 \ifnum\intno@>\tw@\intop\intkern@\fi
 \ifnum\intno@>\thr@@\intop\intkern@\fi\intop}%
\def\intic@{%
    \mathchoice{\hskip.5em}{\hskip.4em}{\hskip.4em}{\hskip.4em}}%
\def\negintic@{\mathchoice
 {\hskip-.5em}{\hskip-.4em}{\hskip-.4em}{\hskip-.4em}}%
\def\ints@@{\iflimtoken@                                              
 \def\ints@@@{\iflimits@\negintic@
   \mathop{\intic@\multintlimits@}\limits                             
  \else\multint@\nolimits\fi                                          
  \eat@}
 \else                                                                
 \def\ints@@@{\iflimits@\negintic@
  \mathop{\intic@\multintlimits@}\limits\else
  \multint@\nolimits\fi}\fi\ints@@@}%
\def\intkern@{\mathchoice{\!\!\!}{\!\!}{\!\!}{\!\!}}%
\def\plaincdots@{\mathinner{\cdotp\cdotp\cdotp}}%
\def\intdots@{\mathchoice{\plaincdots@}%
 {{\cdotp}\mkern1.5mu{\cdotp}\mkern1.5mu{\cdotp}}%
 {{\cdotp}\mkern1mu{\cdotp}\mkern1mu{\cdotp}}%
 {{\cdotp}\mkern1mu{\cdotp}\mkern1mu{\cdotp}}}%
\def\RIfM@{\relax\protect\ifmmode}
\def\text{\RIfM@\expandafter\text@\else\expandafter\mbox\fi}
\let\nfss@text\text
\def\text@#1{\mathchoice
   {\textdef@\displaystyle\f@size{#1}}%
   {\textdef@\textstyle\tf@size{\firstchoice@false #1}}%
   {\textdef@\textstyle\sf@size{\firstchoice@false #1}}%
   {\textdef@\textstyle \ssf@size{\firstchoice@false #1}}%
   \glb@settings}
\def\textdef@#1#2#3{\hbox{{%
                    \everymath{#1}%
                    \let\f@size#2\selectfont
                    #3}}}
\newif\iffirstchoice@
\def\Let@{\relax\iffalse{\fi\let\\=\cr\iffalse}\fi}%
\def\vspace@{\def\vspace##1{\crcr\noalign{\vskip##1\relax}}}%
\def\multilimits@{\bgroup\vspace@\Let@
 \baselineskip\fontdimen10 \scriptfont\tw@
 \advance\baselineskip\fontdimen12 \scriptfont\tw@
 \lineskip\thr@@\fontdimen8 \scriptfont\thr@@
 \lineskiplimit\lineskip
 \vbox\bgroup\ialign\bgroup\hfil$\m@th\scriptstyle{##}$\hfil\crcr}%
\def\Sb{_\multilimits@}%
\def\endSb{\crcr\egroup\egroup\egroup}%
\def\Sp{^\multilimits@}%
\newdimen\ex@
\def\rightarrowfill@#1{$#1\m@th\mathord-\mkern-6mu\cleaders
 \hbox{$#1\mkern-2mu\mathord-\mkern-2mu$}\hfill
 \mkern-6mu\mathord\rightarrow$}%
\def\leftarrowfill@#1{$#1\m@th\mathord\leftarrow\mkern-6mu\cleaders
 \hbox{$#1\mkern-2mu\mathord-\mkern-2mu$}\hfill\mkern-6mu\mathord-$}%
\def\leftrightarrowfill@#1{$#1\m@th\mathord\leftarrow
\mkern-6mu\cleaders
 \hbox{$#1\mkern-2mu\mathord-\mkern-2mu$}\hfill
 \mkern-6mu\mathord\rightarrow$}%
\def\overrightarrow{\mathpalette\overrightarrow@}%
\def\overrightarrow@#1#2{\vbox{\ialign{##\crcr\rightarrowfill@#1\crcr
 \noalign{\kern-\ex@\nointerlineskip}$\m@th\hfil#1#2\hfil$\crcr}}}%
\def\overleftarrow{\mathpalette\overleftarrow@}%
\def\overleftarrow@#1#2{\vbox{\ialign{##\crcr\leftarrowfill@#1\crcr
 \noalign{\kern-\ex@\nointerlineskip}$\m@th\hfil#1#2\hfil$\crcr}}}%
\def\overleftrightarrow{\mathpalette\overleftrightarrow@}%
\def\overleftrightarrow@#1#2{\vbox{\ialign{##\crcr
   \leftrightarrowfill@#1\crcr
 \noalign{\kern-\ex@\nointerlineskip}$\m@th\hfil#1#2\hfil$\crcr}}}%
\def\underrightarrow{\mathpalette\underrightarrow@}%
\def\underrightarrow@#1#2{\vtop{\ialign{##\crcr$\m@th\hfil#1#2\hfil
  $\crcr\noalign{\nointerlineskip}\rightarrowfill@#1\crcr}}}%
\def\underleftarrow{\mathpalette\underleftarrow@}%
\def\underleftarrow@#1#2{\vtop{\ialign{##\crcr$\m@th\hfil#1#2\hfil
  $\crcr\noalign{\nointerlineskip}\leftarrowfill@#1\crcr}}}%
\def\underleftrightarrow{\mathpalette\underleftrightarrow@}%
\def\underleftrightarrow@#1#2{\vtop{\ialign{##\crcr$\m@th
  \hfil#1#2\hfil$\crcr
 \noalign{\nointerlineskip}\leftrightarrowfill@#1\crcr}}}%
\def\qopnamewl@#1{\mathop{\operator@font#1}\nlimits@}
\let\nlimits@\displaylimits
\def\setboxz@h{\setbox\z@\hbox}
\def\varlim@#1#2{\mathop{\vtop{\ialign{##\crcr
 \hfil$#1\m@th\operator@font lim$\hfil\crcr
 \noalign{\nointerlineskip}#2#1\crcr
 \noalign{\nointerlineskip\kern-\ex@}\crcr}}}}
 \def\rightarrowfill@#1{\m@th\setboxz@h{$#1-$}\ht\z@\z@
  $#1\copy\z@\mkern-6mu\cleaders
  \hbox{$#1\mkern-2mu\box\z@\mkern-2mu$}\hfill
  \mkern-6mu\mathord\rightarrow$}
\def\leftarrowfill@#1{\m@th\setboxz@h{$#1-$}\ht\z@\z@
  $#1\mathord\leftarrow\mkern-6mu\cleaders
  \hbox{$#1\mkern-2mu\copy\z@\mkern-2mu$}\hfill
  \mkern-6mu\box\z@$}
\def\projlim{\qopnamewl@{proj\,lim}}
\def\injlim{\qopnamewl@{inj\,lim}}
\def\varinjlim{\mathpalette\varlim@\rightarrowfill@}
\def\varprojlim{\mathpalette\varlim@\leftarrowfill@}
\def\varliminf{\mathpalette\varliminf@{}}
\def\varliminf@#1{\mathop{\underline{\vrule\@depth.2\ex@\@width\z@
   \hbox{$#1\m@th\operator@font lim$}}}}
\def\varlimsup{\mathpalette\varlimsup@{}}
\def\varlimsup@#1{\mathop{\overline
  {\hbox{$#1\m@th\operator@font lim$}}}}
\def\align{\@verbatim \frenchspacing\@vobeyspaces \@alignverbatim
You are using the "align" environment in a style in which it is not defined.}
\let\csname endalign*\endcsname =\endtrivlist
\def\alignat{\@verbatim \frenchspacing\@vobeyspaces \@alignatverbatim
You are using the "alignat" environment in a style in which it is not defined.}
\let\csname endalignat*\endcsname =\endtrivlist
\def\xalignat{\@verbatim \frenchspacing\@vobeyspaces \@xalignatverbatim
You are using the "xalignat" environment in a style in which it is not defined.}
\let\csname endxalignat*\endcsname =\endtrivlist
\def\gather{\@verbatim \frenchspacing\@vobeyspaces \@gatherverbatim
You are using the "gather" environment in a style in which it is not defined.}
\let\csname endgather*\endcsname =\endtrivlist
\def\multiline{\@verbatim \frenchspacing\@vobeyspaces \@multilineverbatim
You are using the "multiline" environment in a style in which it is not defined.}
\let\csname endmultiline*\endcsname =\endtrivlist
\def\arrax{\@verbatim \frenchspacing\@vobeyspaces \@arraxverbatim
You are using a type of "array" construct that is only allowed in AmS-LaTeX.}
\def\tabulax{\@verbatim \frenchspacing\@vobeyspaces \@tabulaxverbatim
You are using a type of "tabular" construct that is only allowed in AmS-LaTeX.}
\let\csname endarrax*\endcsname =\endtrivlist
\let\csname endtabulax*\endcsname =\endtrivlist
 \def\endequation{%
     \ifmmode\ifinner 
      \iftag@
        \addtocounter{equation}{-1} 
        $\hfil
           \displaywidth\linewidth\@taggnum\egroup \endtrivlist
        \global\tag@false
        \global\@ignoretrue   
      \else
        $\hfil
           \displaywidth\linewidth\@eqnnum\egroup \endtrivlist
        \global\tag@false
        \global\@ignoretrue 
      \fi
     \else   
      \iftag@
        \addtocounter{equation}{-1} 
        \eqno \hbox{\@taggnum}
        \global\tag@false%
        $$\global\@ignoretrue
      \else
        \eqno \hbox{\@eqnnum}
        $$\global\@ignoretrue
      \fi
     \fi\fi
 } 
 \newif\iftag@ \tag@false
 \def\TCItag{\@ifnextchar*{\@TCItagstar}{\@TCItag}}
 \def\@TCItag#1{%
     \global\tag@true
     \global\def\@taggnum{(#1)}%
     \global\def\@currentlabel{#1}}
 \def\@TCItagstar*#1{%
     \global\tag@true
     \global\def\@taggnum{#1}%
     \global\def\@currentlabel{#1}}
     \def\tag{\@ifnextchar*{\@tagstar}{\@tag}}
     \def\@tag#1{%
         \global\tag@true
         \global\def\@taggnum{(#1)}}
     \def\@tagstar*#1{%
         \global\tag@true
         \global\def\@taggnum{#1}}
\def\dfrac#1#2{{\displaystyle {#1 \over #2}}}%
\def\qed{\hfill$\square$\par}
\def\func#1{\mathop{\mathrm{#1}}\nolimits}
\def\diint{\mathop{\int\int}}
\def\dint{\displaystyle\int}
\def\Xint#1{\mathchoice
{\XXint\displaystyle\textstyle{#1}}%
{\XXint\textstyle\scriptstyle{#1}}%
{\XXint\scriptstyle\scriptscriptstyle{#1}}%
{\XXint\scriptscriptstyle\scriptscriptstyle{#1}}%
\!\int}
\def\XXint#1#2#3{{\setbox0=\hbox{$#1{#2#3}{\int}$ }
\vcenter{\hbox{$#2#3$ }}\kern-.6\wd0}}
\def\oint{\Xint-}
\def\toint{\Xint-}
\def\enddoc{

\begin{document}
	\title{Finite extinction time for subsolutions of the weighted Leibenson equation on Riemannian manifolds}
	\author{Philipp S\"urig}
	\date{January 2026}
	\maketitle
	
	\begin{abstract}
		We consider on Riemannian manifolds the non-linear evolution equation $$\rho \partial _{t}u=\Delta _{p}u^{q}.$$ Assuming that the manifold satisfies a \textit{(weighted) Sobolev inequality} and under certain assumptions on $p, q$ and function $\rho$, we prove that weak subsolutions to this equation have a finite extinction time. In particular, our main result holds in the case of a \textit{Cartan-Hadamard manifold}.
		
	\end{abstract}
	
	\let\thefootnote\relax\footnotetext{\textit{\hskip-0.6truecm 2020 Mathematics Subject Classification.} 35K55, 58J35, 35K92. \newline
		\textit{Key words and phrases.} Leibenson equation, doubly nonlinear
		parabolic equation, Riemannian manifold, finite extinction time. \newline
		The author was funded by the Deutsche Forschungsgemeinschaft (DFG,
		German Research Foundation) - Project-ID 317210226 - SFB 1283.}
	
	\tableofcontents
	
	\section{Introduction}

	Let $M$ be a Riemannian manifold. We consider solutions of the non-linear evolution
	equation 
	\begin{equation}
		\rho\partial _{t}u=\Delta _{p}u^{q},  \label{evoeq}
	\end{equation}%
	where $p>1$, $q>0$, $\rho=\rho(x)$ is a positive function of $x\in M$, $u=u(x,t)$ is an unknown non-negative function of $x\in M$, $t\geq0$ and $%
	\Delta _{p}$ is the Riemannian $p$-Laplacian 
	\begin{equation*}
		\Delta _{p}v=\func{div}\left( |\nabla v|^{p-2}\nabla v\right) .
	\end{equation*}

When $\rho\equiv1$, equation (\ref{evoeq}) becomes the \textit{Leibenson equation} \begin{equation}\label{usualleibe}\partial_{t}u= \Delta _{p}u^{q},\end{equation} which is also referred to as a \textit{doubly non-linear parabolic equation}.

In the present paper, we are interested in the existence of some \textit{finite extinction time} $T>0$ for solutions of equation (\ref{evoeq}), which means that, \begin{equation}\label{finiteextinct}u(\cdot, t)\equiv0\quad \text{for all}~t\geq T.\end{equation}

Assume that \begin{equation}\label{q(p-1)int}D:=1-q(p-1)>0.\end{equation}
Let us cite a known example of finite extinction time for (\ref{evoeq}) when $M=\mathbb{R}^{n}$. Let function $\rho(x)$ satisfy, for some $l\geq 0$, \begin{equation}\label{rhornl}\rho(x)=|x|^{-l}, \quad |x|\geq 1.\end{equation} Then by \cite{tedeev2007interface}, if $l$ satisfies  \begin{equation}\label{last}l^{\ast}:=\frac{p-nD}{1-D}<l<p<n,\end{equation} then (\ref{evoeq}) admits a solution with finite extinction time, namely 
\begin{equation}\label{pseudobarenblatt}u(x, t)=(T-t)_{+}^{\frac{n-l}{\kappa_{l}}}\left[C+\kappa_{l}^{\frac{1}{p-1}}\frac{D}{(p-l)q}|x|^{\frac{p-l}{p-1}}(T-t)_{+}^{\frac{p-l}{(p-1)\kappa_{l}}}\right]^{-\frac{p-1}{D}},\end{equation} where $C>0$ is any constant and $$\kappa_{l}=(1-D)(l-l^{\ast})$$ (cf. \cite{tedeev2007interface}).

In particular, if \begin{equation}\label{condunweight}p< nD,\end{equation} we can choose $l=0$ in (\ref{last}), in which case function $u$ from (\ref{pseudobarenblatt}) solves the unweighted Leibenson equation (\ref{usualleibe}). When we also assume $p=2$, equation (\ref{usualleibe}) becomes the \textit{parabolic porous medium equation} and condition (\ref{condunweight}) amounts to $q<\frac{n-2}{n}$. In this case, finite extinction time for solutions of the Leibenson equation (\ref{usualleibe}) was, for example, investigated in \cite{benilan1981continuous, blanchet2009asymptotics, DaskalopoulosSesum+2008+95+119, Winkler2011}.

In the present paper, we prove finite extinction time for solutions of equation (\ref{evoeq}) on Riemannian manifolds under certain assumptions on the manifold $M$ and function $\rho$.

Given a positive smooth function $h$, let $\mu$ be the measure defined by $$d\mu=hd\textnormal{vol},$$ where $\textnormal{vol}$ denotes the \textit{Riemannian measure} on $M$. The pair $(M, \mu)$ is called \textit{weighted manifold}. We assume that the weighted manifold $(M, \mu)$ is geodesically complete.

The first main result of the present paper is as follows (cf. \textbf{Theorem \ref{thmfinex}}).

\begin{theorem}\label{thmfinexint}
Assume that (\ref{q(p-1)int}) holds. Suppose that the weighted manifold $(M, \mu)$ satisfies the Sobolev inequality \begin{equation}\label{Sobolevinequality}\left(\int_{M}{|f|^{p\kappa} d\mu}\right)^{1/\kappa}\leq C\int_{M}{|\nabla f|^{p}d\mu}\quad \forall~f\in W^{1, p}(M),\end{equation}  with some Sobolev exponent $\kappa>1$. Let $u$ be a non-negative bounded solution of (\ref{evoeq}) in $M\times [0, \infty)$, with initial function $u(\cdot, 0)=u_{0}\in L^{\zeta}(M, \rho d\mu)\cap L^{\infty}(M)$ for some $\zeta\geq1$. If $\rho$ is bounded and satisfies \begin{equation}\label{finitecondint}\left|\left|\rho\right|\right|_{L^{\theta}(M, \mu)}<\infty,\end{equation} for some $\theta=\theta(p, q, \kappa, \zeta)$, then $u$ has a finite extinction time in $M$.
\end{theorem}

Let us emphasize that the only assumption that we impose on the manifold, besides the completeness, is the Sobolev inequality (\ref{Sobolevinequality}). For example, this inequality holds if $(M, \mu)$ is a \textit{Cartan-Hadamard manifold} \cite{hoffman1974sobolev}, that is, $M$ is simply connected and has everywhere non-positive sectional curvature; in this case $\kappa=\frac{n}{n-p}$, where $n=\textnormal{dim}~M>p$.

In fact, we prove a more general result Theorem \ref{thmfinex}, where the weighted manifold $(M, \mu)$ satisfies a certain weighted Sobolev inequality (see Section \ref{Secini}), which contains Theorem \ref{thmfinexint}. 

A finite extinction time for solutions of (\ref{evoeq}) in the case (\ref{q(p-1)int}) on a Riemannian manifold was obtained in \cite{andreucci2021extinction} by D. Andreucci and A. F. Tedeev under the hypotheses of a certain isoperimetric inequality and upper bounds of the volume growth function. These conditions imply a weighted Sobolev inequality, so that the result in \cite{andreucci2021extinction} is covered by our result Theorem \ref{thmfinex}. 

When $p=2$ and $\rho\equiv 1$, that is, (\ref{evoeq}) becomes the \textit{porous medium equation} and $D=1-q$, finite extinction time was proved in \cite{bonforte2008fast} by M. Bonforte, G. Grillo, and J. L. Vazquez when $M$ has non-positive curvature and $q<\frac{n-2}{n}$ or $M$ has curvature $k<0$ and $\frac{n-2}{n}<q<1$.

In fact, we prove Theorem \ref{thmfinexint} for  \begin{equation}\label{defthetaint}\theta=\frac{\kappa}{ \kappa-1-\frac{D}{\sigma}}\end{equation} where \begin{equation}\label{sigmalargeintreal}\sigma= \max\left(pq, \zeta-D, \frac{D}{\kappa-1}\right).\end{equation}
Note that, $\theta$ could be finite or infinite but we always have $\theta>\frac{\kappa}{\kappa-1}$. Therefore, finite extinction time occurs also if \begin{equation}\label{weakcondrho}\left|\left|\rho\right|\right|_{L^{\frac{\kappa}{\kappa-1}}(M, \mu)}<\infty.\end{equation}
For example, in the case of a Cartan-Hadamard manifold, (\ref{weakcondrho}) becomes $$\left|\left|\rho\right|\right|_{L^{\frac{n}{p}}(M, \mu)}<\infty .$$

The restriction $\sigma\geq pq$ in (\ref{sigmalargeintreal}) seems to be technical. It comes from the proof of the Caccioppoli inequality from \cite{grigor2023finite} (see Section 2.2 therein). If one can prove this inequality without this restriction, then one obtains the following:

\begin{conjecture}\label{conjintr}
The statement of Theorem \ref{thmfinexint} holds with \begin{equation}\label{thetamax}\theta=\left\{\begin{array}{ll}
		\frac{\kappa}{\kappa-1-\frac{D}{\zeta-D}}, & \text{if }\kappa\geq \frac{1}{\zeta-D}, \\ 
		\infty, & \text{if }\kappa< \frac{1}{\zeta-D},%
	\end{array}%
	\right.\end{equation} that is, $$\sigma=\max\left(\zeta-D, \frac{D}{\kappa-1}\right).$$ 
\end{conjecture}
Clearly, $\theta$ in (\ref{defthetaint}) is strictly smaller than $\theta$ in (\ref{thetamax}) in the case $\kappa\geq\frac{1}{\zeta-D}$.

If Conjecture \ref{conjintr} is true and $\zeta=1$, then it gives for $M=\mathbb{R}^{n}$ the following. The condition $\kappa< \frac{1}{1-D}$ is in this case equivalent to $$n>\frac{p}{D},$$ that is, to (\ref{condunweight}). Under this condition we have $\theta=\infty$, so that (\ref{finitecondint}) holds for $\rho\equiv1$. Hence, we conclude finite extinction time for solutions of the unweighted Leibenson equation (\ref{usualleibe}) in $\mathbb{R}^{n}$. However, this conjecture is not yet proved and Theorem \ref{thmfinexint} gives the following sufficient condition for finite extinction time for solutions of (\ref{usualleibe}): $$\sigma=\frac{D}{\kappa-1}=\frac{D(n-p)}{p},$$ which is satisfied if $n$ is large enough as can be seen from (\ref{sigmalargeintreal}). The example of $\mathbb{R}^{n}$ is discussed in Section \ref{examplessec}. Moreover, we construct in Proposition \ref{weightmodrhoone} (see also Lemma \ref{weightedmodelrho}) a class of \textit{weighted models} $(M, \mu)$ (see Subsection \ref{subsecunwe} for the definition of this term), so that finite extinction for solutions of the unweighted Leibenson equation (\ref{usualleibe}) holds in $M$.

The structure of the present paper is as follows.

In Section \ref{secweak} we define the notion of a weak solution of equation (\ref{evoeq}).

In Section \ref{Secini} we prove our main result about finite extinction time.

In Section \ref{examplessec} we construct more explicitly manifolds with finite extinction time. 
In Subsection \ref{subsecunwe} we construct and discuss the aforementioned weighted model with finite extinction time for solutions of the unweighted Leibenson equation (\ref{usualleibe}).
In Subsections \ref{secCH} and \ref{subsecNR} we give more detailed descriptions on function $\rho$ so that finite extinction holds for solutions of equation (\ref{evoeq}) in the case when $M$ is a Cartan-Hadamard manifold and a manifold of \textit{non-negative Ricci curvature}, respectively, by using known results about weighted Sobolev inequalities on these manifolds.

For results about solutions of the unweighted Leibenson equation (\ref{usualleibe}) on Riemannian manifolds in other cases of $p, q$, we refer to \cite{dekkers2005finite, grigor2023finite, Grigoryan2024, Grigoryan2024a, surig2024sharp}.

\begin{acknowledgement}
	The author would like to thank Alexander Grigor'yan for many helpful discussions.
\end{acknowledgement}
	
	\section{Weak subsolutions}
	
	\label{secweak}
	
	We consider in what follows the following non-linear evolution
	equation on a Riemannian manifold $M$:%
	\begin{equation}
		\rho\partial _{t}u=\Delta _{p}u^{q},  \label{dtv}
	\end{equation} where $\rho$ is a non-negative measurable function on $M$.
	By a \textit{subsolution} of (\ref{dtv}) we mean a non-negative function $u$
	satisfying 
	\begin{equation}
		\rho\partial _{t}u\leq \Delta _{p}u^{q}  \label{subdtv}
	\end{equation}%
	in a certain weak sense as explained below.
	
	We assume throughout that 
	\begin{equation*}
		p>1\ \ \text{and}\ \ \ q>0.
	\end{equation*}%
	Set%
	\begin{equation}\label{D}
		D =1-q(p-1).
	\end{equation}%
		
	Given a positive smooth function $h$, let $\mu$ be the measure defined by $$d\mu=hd\textnormal{vol},$$ where $\textnormal{vol}$ denotes the Riemannian measure on $M$. The pair $(M, \mu)$ is called \textit{weighted manifold}.
	
\begin{definition}
	\normalfont
	Let $\Omega$ be an open subset of $M$. Assume that $\rho\in L_{loc}^{1}(\Omega)$. We say that a non-negative function $u=u(x, t)$ is a \textit{weak
		subsolution} of (\ref{dtv}) in $\Omega\times I$, if 
	\begin{equation}  \label{defvonsoluq}
		u\in C\left(I; L^{\zeta}(\Omega, \rho d\mu)\right)\quad \textnormal{and}\quad 
		u^{q}\in L_{loc}^{p}\left(I; W^{1, p}(\Omega)\right),
	\end{equation} where $\zeta\geq1$
	and (\ref{subdtv}) holds weakly in $\Omega\times I$, which means that for all $t_{1}, t_{2}\in I$ with $t_{1}<t_{2}$, and all non-negative \textit{test functions} 
	\begin{equation}  \label{defvontestsoluq}
		\psi\in W_{loc}^{1, \frac{\zeta}{\zeta-1}}\left(I;
		L^{\frac{\zeta}{\zeta-1}}(\Omega, \rho d\mu)\right)\cap L_{loc}^{p}\left(I; W_{0}^{1,
			p}(\Omega)\right),
	\end{equation}
	we have 
	\begin{equation}  \label{defvonweaksolq}
		\left[\int_{\Omega}{u\psi\rho d\mu}\right]_{t_{1}}^{t_{2}}+\int_{t_{1}}^{t_{2}}{%
			\int_{\Omega}{-u\partial_{t}\psi\rho+|\nabla u^{q}|^{p-2}\langle\nabla u^{q},
				\nabla \psi\rangle}d\mu dt}\leq 0.
	\end{equation}
\end{definition}
	
	\textit{Weak solutions} of (\ref{dtv}) are
	defined analogously. Existence and uniqueness results
	for the Cauchy problem with the above notion of weak solutions of (\ref{dtv}) were obtained in the
	euclidean case, for example, in \cite{andreucci1990new, ishige1996existence} (see also \cite{tedeev2007interface}).

		From now on let us assume that $M$ is geodesically complete.
		
		The next lemma can be proved similarly to Lemma 2.6 in \cite{grigor2023finite}.
	\begin{lemma}[Caccioppoli type inequality]
		\label{Lem1}\label{LemIni} Let $I$ be an interval in $\mathbb{R}_{+}=[0, \infty)$ and let $u=u\left( x,t\right) $ be a bounded
		non-negative subsolution to \emph{(\ref{dtv})} in $M\times I$. Fix some real $\sigma $
		such that 
		\begin{equation}
			\sigma \geq \max \left(pq, \zeta-D\right).  \label{la>3-m}
		\end{equation}%
		Choose $t_{1},t_{2}\in I$ such that $t_{1}<t_{2}$. Then%
		\begin{equation}
			\left[ \int_{M }u^{\sigma+D }\rho d\mu\right] _{t_{1}}^{t_{2}}+c_{1}%
			\int_{M\times [t_{1}, t_{2}]}\left\vert \nabla \left( u^{\sigma/p } \right) \right\vert
			^{p}d\mu dt\leq 0,
			\label{vetacor}
		\end{equation}
		where $c_{1}$ is a positive constant depending on $p$, $q$, $\sigma $. 
	\end{lemma}

Let us observe for a later usage that \begin{equation}\label{valpha}u^{\sigma/p}\in L_{loc}^{p}\left(I; W^{1, p}(M)\right).\end{equation}
Indeed, using $\sigma/p\geq q$, we get that the function $\Phi(s)=s^{\frac{\sigma}{pq}}$ is Lipschitz on any bounded interval in $[0, \infty)$. Thus, $u^{\sigma/p}=\Phi(u^{q})\in W^{1, p}(M)$ and $$\left|\nabla u^{\sigma/p}\right|=\left|\Phi^{\prime}(u^{q})\nabla u^{q}\right|\leq C\left|\nabla u^{q}\right|,$$ whence for any bounded interval $J\subset I$, \begin{equation*}\label{valpha0}\int_{M\times J}u^{\sigma}+\left\vert \nabla \left( u^{\sigma/p } \right) \right\vert^{p}\leq  C^{\prime}\int_{M\times J}u^{\sigma}+\left\vert \nabla  u^{q } \right\vert^{p},\end{equation*} which is finite since $$\int_{M\times J}u^{\sigma}\leq \textnormal{const}~||u||_{L^{\infty}}^{\sigma-pq}\int_{M\times J}u^{pq}$$ and proves (\ref{valpha}).
	
\section{Finite extinction time}
	
\label{Secini} 
From now on we always assume that $D>0$ (see (\ref{D})).
\begin{definition}
We say that a function $u=u(x, t)$ has \textit{finite extinction time} in $M$ if there exists $T>0$ such that $$u(\cdot, t)\equiv0\quad \text{for all}~t\geq T.$$	
\end{definition}

\begin{definition}
Let $\omega$ be a positive measurable function in $M$. We say that the weighted manifold $(M, \mu)$ satisfies a weighted Sobolev inequality with weight $\omega$ if, for all $v\in W^{1, p}(M)$, there exists a constant $C>0$ such that \begin{equation}\label{weightsobo}\left(\int_{M}{|v|^{p\kappa}\omega d\mu}\right)^{1/\kappa}\leq C\int_{M}{|\nabla v|^{p}d\mu},\end{equation} where $\kappa>1$.
\end{definition}

The following theorem is our main result. It contains Theorem \ref{thmfinexint} from Introduction.

\begin{theorem}\label{thmfinex}
Let $u$ be a non-negative bounded subsolution of (\ref{dtv}) in $M\times \mathbb{R}_{+}$ with $u(\cdot, 0)=u_{0}\in L^{\zeta}(M, \rho d\mu)\cap L^{\infty}(M)$. Suppose that $M$ satisfies the weighted Sobolev inequality (\ref{weightsobo}) with weight $\omega$. Assume that \begin{equation}\label{finitecond}\left|\left|\frac{\rho}{\omega}\right|\right|_{L^{\theta}(M, \omega d\mu)}<\infty,\end{equation} for some $\theta=\theta(p, q, \kappa, \zeta)$. Then $u$ has finite extinction time in $M$.
\end{theorem}

\begin{remark}\label{optimalrange}
We will see in the proof that \begin{equation}\label{deftheta}\theta=\frac{\kappa}{ \kappa-1-\frac{D}{\sigma}},\end{equation} where \begin{equation}\label{sigmalarge}\sigma= \max\left(pq, \zeta-D, \frac{D}{\kappa-1}\right)\end{equation}

In particular, in the case when $\sigma=\frac{D}{\kappa-1}$, we have $\theta=\infty$. Hence, in this case, condition (\ref{finitecond}) becomes $$\left|\left|\frac{\rho}{\omega}\right|\right|_{L^{\infty}(M)}<\infty.$$
\end{remark}

\begin{proof}
Let $\sigma$ be large enough as in (\ref{sigmalarge}) and set $$\Phi(t)=\int_{M}{u^{\sigma+D}(\cdot, t)\rho ~d\mu}$$ and set also $v=u^{\sigma/p}$. Note that by (\ref{sigmalarge}), we always have $\frac{\sigma \kappa}{\sigma+D}\geq1$. Let us first assume that $\frac{\sigma \kappa}{\sigma+D}>1$. Then we obtain from Hölder's inequality with the conjugated Hölder exponents $$\frac{\sigma \kappa}{\sigma+D}>1\quad\textnormal{and}\quad \frac{\sigma \kappa}{\sigma\kappa-(\sigma+D)}=:\theta<\infty,$$ \begin{align*}
\Phi&=\int_{M}{u^{\sigma+D}\rho d\mu}=\int_{M}{v^{p\frac{\sigma+D}{\sigma}}\rho d\mu}=\int_{M}{v^{p\frac{\sigma+D}{\sigma}}\omega^{\frac{\sigma+D}{\sigma \kappa}}\rho \omega^{-\frac{\sigma+D}{\sigma \kappa}}d\mu}\\&\leq \left(\int_{M}{v^{p\kappa}\omega d\mu}\right)^{\frac{\sigma +D}{\sigma \kappa}}\left(\int_{M}\left(\frac{\rho^{\sigma \kappa}}{\omega^{\sigma+D}}\right)^{\frac{1}{\sigma \kappa-(\sigma+D)}}d\mu\right)^{\frac{\sigma\kappa-(\sigma+D)}{\sigma \kappa}}\\&=\left(\int_{M}{v^{p\kappa}\omega d\mu}\right)^{\frac{\sigma+D}{\sigma \kappa}}\left(\int_{M}\left(\frac{\rho}{\omega}\right)^{\theta}\omega d\mu\right)^{\frac{1}{\theta}},\end{align*} where by (\ref{finitecond}), $$\left(\int_{M}\left(\frac{\rho}{\omega}\right)^{\theta}\omega d\mu\right)^{\frac{1}{\theta}}<\infty.$$

In the case $\frac{\sigma \kappa}{\sigma+D}=1$ we have by the same arguments $$\Phi\leq \left(\int_{M}{v^{p\kappa}\omega d\mu}\right)^{\frac{1}{\kappa}}\left|\left|\frac{\rho}{\omega}\right|\right|_{L^{\infty}(M)},$$ so that in both cases \begin{equation}\label{hoelderandfinite}\Phi\leq\left(\int_{M}{v^{p\kappa}\omega d\mu}\right)^{\frac{\sigma+D}{\sigma \kappa}}\left|\left|\frac{\rho}{\omega}\right|\right|_{L^{\theta}(M, \omega d\mu)}.\end{equation}

Recall that by (\ref{valpha}), $ v(\cdot, t)\in W^{1, p}(M)$ for all $t>0$. Hence, combining (\ref{hoelderandfinite}) and (\ref{weightsobo}) we deduce \begin{equation}\label{upperphi}\Phi\leq C \left(\int_{M}{|\nabla v|^{p}}d\mu\right)^{\frac{\sigma+D}{\sigma}}.\end{equation} From (\ref{vetacor}) we have for $t_{1}<t_{2}$, $$\Phi(t_{2})-\Phi(t_{1})\leq -c_{1}
\int_{M\times[t_{1}, t_{2}]}\left\vert \nabla v \right\vert
^{p}d\mu dt,$$ so that, in particular, $\Phi$ is monotone decreasing. Therefore, $\Phi(t)$ is differentiable for almost every $t>0$ and $$\frac{d}{dt}\Phi\leq -c_{1} \int_{M}\left\vert \nabla v \right\vert
^{p}d\mu.$$ Combining this with (\ref{upperphi}), we get  $$C^{-\frac{\sigma}{\sigma+D}}\Phi^{\frac{\sigma}{\sigma+D}}\leq \int_{M}{|\nabla v|^{p}}\leq -c_{1}^{-1}\frac{d}{dt}\Phi,$$ so that $$\frac{d}{dt}\Phi\leq -c \Phi^{\frac{\sigma}{\sigma+D}},$$ where $c=c_{1}C^{-\frac{\sigma}{\sigma+D}}$. Note that, since $\sigma+D\geq \zeta$, we have $u\in C([0, +\infty), L^{\zeta}(M, \rho d\mu))$. Hence, we can solve the following ODE: $$\frac{d}{dt}\Psi=-c\Psi^{\frac{\sigma}{\sigma+D}},\quad \Psi(0)=\Phi(0),$$ and obtain $$\Psi(t)=\left(\Phi(0)^{\frac{D}{\sigma+D}}-\frac{D}{\sigma+D}ct\right)^{\frac{D}{\sigma+D}},\quad \textnormal{for}\quad0\leq t\leq \frac{\sigma+D}{cD}\Phi(0)^{\frac{D}{\sigma+D}}=:T.$$ By comparison, this implies that $\Phi\leq \Psi$ on $[0, T]$ and $\Phi=0$ for $t\geq T$, which finishes the proof.
\end{proof}

\section{Examples}\label{examplessec}

\subsection{Unweighted Leibenson equation}\label{subsecunwe}

Here we want to construct a class of weighted manifold so that any non-negative bounded subsolution of the unweighted Leibenson equation $$\partial_{t}u= \Delta _{p}u^{q}$$ has finite extinction time in $M$.

Let $(M, g, \mu)$ be a weighted manifold. Let $a$ and $b$ be positive smooth functions on $M$ and consider a new weighted manifold $(M, \widetilde{\mu}, \widetilde{g})$ with metric and measure on $M$: \begin{equation}\label{changemetricmeasureex}\widetilde{g}=ag\quad\text{and}\quad d\widetilde{\mu}=b d\mu.\end{equation}

\begin{lemma}\label{lemweightsobo}
Suppose that the manifold $(M, g, \mu)$ satisfies the Sobolev inequality \begin{equation}\label{sobolevohneweight}\left(\int_{M}{|v|^{p\kappa} d\mu}\right)^{1/\kappa}\leq C\int_{M}{|\nabla v|^{p}d\mu}.\end{equation}	
Consider the manifold $(M, \widetilde{\mu}, \widetilde{g})$ with metric and measure given by (\ref{changemetricmeasureex}) and assume that \begin{equation*}a^{p/2}=b.\end{equation*} Then $(M, \widetilde{\mu}, \widetilde{g})$ satisfies the weighted Sobolev inequality (\ref{weightsobo}) with weight \begin{equation}\label{defomegaforchange}\omega=\frac{1}{a^{p/2}}.\end{equation}
\end{lemma}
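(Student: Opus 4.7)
The plan is to verify the weighted Sobolev inequality on $(M,\widetilde g,\widetilde\mu)$ by a direct conformal-change computation, showing that both sides of the inequality transform in complementary ways so that, with the choice $\omega=1/a^{p/2}$ and $b=a^{p/2}$, the inequality on $(M,\widetilde g,\widetilde\mu)$ reduces exactly to the hypothesis (\ref{sobolevohneweight}) on $(M,g,\mu)$.

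First I would record how the gradient transforms under $\widetilde g = ag$. Since the dual metric satisfies $\widetilde g^{ij}=a^{-1}g^{ij}$, for any smooth $v$ one has $|\widetilde\nabla v|_{\widetilde g}^{2}=\widetilde g^{ij}\partial_i v\,\partial_j v = a^{-1}|\nabla v|_{g}^{2}$, hence $|\widetilde\nabla v|_{\widetilde g}^{p}=a^{-p/2}|\nabla v|_{g}^{p}$. Using the assumption $d\widetilde\mu = b\,d\mu = a^{p/2}\,d\mu$, the Dirichlet $p$-energy is invariant under the simultaneous change of metric and measure:
$$
\int_{M}|\widetilde\nabla v|_{\widetilde g}^{p}\,d\widetilde\mu
=\int_{M}a^{-p/2}|\nabla v|_{g}^{p}\,a^{p/2}\,d\mu
=\int_{M}|\nabla v|_{g}^{p}\,d\mu.
$$

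Next I would compute the left-hand side of the target inequality with the weight $\omega=1/a^{p/2}$ from (\ref{defomegaforchange}). Using again $d\widetilde\mu = a^{p/2}\,d\mu$,
$$
\int_{M}|v|^{p\kappa}\,\omega\,d\widetilde\mu
=\int_{M}|v|^{p\kappa}\,a^{-p/2}\,a^{p/2}\,d\mu
=\int_{M}|v|^{p\kappa}\,d\mu.
$$
Thus raising to the power $1/\kappa$ and invoking the Sobolev inequality (\ref{sobolevohneweight}) on $(M,g,\mu)$, I obtain
$$
\left(\int_{M}|v|^{p\kappa}\omega\,d\widetilde\mu\right)^{1/\kappa}
=\left(\int_{M}|v|^{p\kappa}\,d\mu\right)^{1/\kappa}
\le C\int_{M}|\nabla v|_{g}^{p}\,d\mu
= C\int_{M}|\widetilde\nabla v|_{\widetilde g}^{p}\,d\widetilde\mu,
$$
which is exactly (\ref{weightsobo}) on $(M,\widetilde g,\widetilde\mu)$ with the same Sobolev exponent $\kappa$ and the same constant $C$.

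There is essentially no obstacle in this argument beyond correctly tracking which metric the gradient and the norm are taken with respect to; the key algebraic point is that the conformal factor $a^{p/2}$ arising in the volume change from $d\mu$ to $d\widetilde\mu$ cancels the factor $a^{-p/2}$ produced by the $p$-th power of the gradient on the right, while the same volume factor cancels the weight $\omega=1/a^{p/2}$ on the left. The relation $b=a^{p/2}$ is precisely the one that forces both cancellations simultaneously, which is why this particular compatibility between $a$ and $b$ is imposed in the hypothesis.
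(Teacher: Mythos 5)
Your proof is correct and follows essentially the same route as the paper: compute $|\widetilde\nabla v|_{\widetilde g}^{2}=a^{-1}|\nabla v|^{2}$, observe that the $p$-Dirichlet energy is invariant under the simultaneous change $\widetilde g=ag$, $d\widetilde\mu=a^{p/2}d\mu$, note that the weight $\omega=a^{-p/2}$ cancels the measure change on the left-hand side, and then invoke the unweighted Sobolev inequality. No issues.
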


\begin{proof}
Because $\widetilde{\nabla}=\frac{1}{a}\nabla$, we have $$|\widetilde{\nabla} v|_{\widetilde{g}}^{2}=\langle \widetilde{\nabla}v, \widetilde{\nabla}v\rangle_{\widetilde{g}}=\frac{1}{a}\langle \nabla v, \nabla v\rangle=\frac{1}{a}|\nabla v|^{2}.$$ Therefore, $$\int_{M}|\widetilde{\nabla}v|_{\widetilde{g}}^{p}d\widetilde{\mu}=\int_{M}\frac{1}{a^{p/2}}|\nabla v|^{p}a^{p/2} d\mu=\int_{M}|\nabla v|^{p}d\mu,$$ which implies, by using (\ref{sobolevohneweight}), $$\left(\int_{M}{|v|^{p\kappa}\omega d\widetilde{\mu}}\right)^{1/\kappa}=\left(\int_{M}{|v|^{p\kappa} d\mu}\right)^{1/\kappa}\leq C\int_{M}{|\nabla v|^{p}d\mu}= \int_{M}|\widetilde{\nabla}v|_{\widetilde{g}}^{p}d\widetilde{\mu},$$ so that $(M, \widetilde{\mu}, \widetilde{g})$ satisfies the weighted Sobolev inequality (\ref{weightsobo}) with weight $\omega$ given by (\ref{defomegaforchange}).
\end{proof}

\begin{example}\label{examplesobeuc}
Assume that $n>p$ and let $a$ be a positive smooth function and let us consider the manifold $(M, g, \mu)$ with metric and measure given by \begin{equation}\label{exampleRn}g=a g_{eucl}\quad\text{and}\quad d\mu=a^{p/2}dx,\end{equation} where $dx$ denotes the Lebesgue measure on $(\mathbb{R}^{n}, g_{eucl})$. Since the manifold $(\mathbb{R}^{n}, g_{eucl}, dx)$ satisfies the Sobolev inequality (\ref{sobolevohneweight}) with \begin{equation}
	\kappa =\dfrac{n}{n-p},\label{k}
\end{equation} we obtain from Lemma \ref{lemweightsobo} that the manifold $(M, g, \mu)$ satisfies the weighted Sobolev inequality (\ref{weightsobo}) with weight $\omega$ given by (\ref{defomegaforchange}).
\end{example}	

Let us consider the topological space $M=(0, +\infty)\times \mathbb{S}^{n-1}$, $n\geq 2$, so that any point $x\in M$ can be written in the polar form $x=(r, \varphi)$ with $r\in(0, +\infty)$ and $\varphi \in \mathbb{S}^{n-1}$. We equip $M$ with the Riemannian metric $ds^{2}$ that is defined in polar coordinates $(r, \varphi)$ by $$ds^{2}=dr^{2}+\psi^{2}(r)d\varphi^{2}$$ with $\psi(r)$ being a smooth positive function on $(0, +\infty)$ and $d\varphi^{2}$ being the Riemannian metric on $\mathbb{S}^{n-1}$. In this case, $M$ is called a \textit{Riemannian model}.
The Riemannian measure $\textnormal{vol}$ on $M$ with respect to this metric is given by \begin{equation*}\label{measonmodpsi}d\textnormal{vol}=\psi^{n-1}(r)drd\nu(\varphi),\end{equation*} where $dr$ denotes the Lebesgue measure on $(0, +\infty)$ and $d\nu$ denotes the Riemannian measure on $\mathbb{S}^{n-1}$. 
Then the \textit{area function} $S$ on $(0, +\infty)$ is given by \begin{equation*}\label{defofareabd}S(r)=\nu(\mathbb{S}^{n-1})\psi^{n-1}(r).\end{equation*}

Given a smooth positive function $h$ on $M$, that only depends on the polar radius $r$, let us define a measure $\mu$ on $M$ by $$d\mu=hd\textnormal{vol}.$$ We obtain that the weighted manifold $(M, \mu)$ has the area function \begin{equation*}\label{areafuncontild}S_{\mu}(r)=h(r)S(r).\end{equation*} Then the weighted manifold $(M, \mu)$ is called a \textit{weighted model} and we get that \begin{equation}\label{fubinionmode}d\mu=S_{\mu}(r)drd\nu(\varphi).\end{equation}

\begin{lemma}\label{weightedmodelrho}
Let $(M, g, \mu)$ be the weighted manifold with metric and measure given by (\ref{exampleRn}), that is, \begin{equation*}g=a g_{eucl}\quad\text{and}\quad d\mu=a^{p/2}dx,\end{equation*} where $dx$ denotes the Lebesgue measure on $(\mathbb{R}^{n}, g_{eucl})$.
Denote with $(R, \varphi)$ the euclidean coordinates and suppose that for some constant $c>0$ and all large $R$, $$a(x)=a(R)=c R^{-l},$$ where $l\in [0, 2)$. Then the manifold $(M, g)$ is a model manifold and the weighted model $(M, g, \mu)$ has an area function $S_{\mu}$ such that, for large $r$, \begin{equation}\label{areweightmod}S_{\mu}(r)\simeq r^{\frac{l(n-p)}{2-l}+n-1},\end{equation} where $r$ denotes the Riemannian polar radius in $(M, g)$.
\end{lemma}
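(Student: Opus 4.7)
The plan is to exploit the rotational symmetry of the conformal factor $a(R)$, pass to Riemannian polar coordinates on $(M,g)$, and then read off $S_\mu$ by a direct change of variables from the euclidean to the Riemannian radial coordinate.

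First, since $a$ depends only on the euclidean radius $R$, I write $g_{eucl} = dR^{2} + R^{2} d\varphi^{2}$, so that
$$g = a(R)\, dR^{2} + a(R) R^{2}\, d\varphi^{2}.$$
Setting $r(R) := \int_{0}^{R} \sqrt{a(s)}\, ds$, I have $dr = \sqrt{a(R)}\, dR$ and hence
$$g = dr^{2} + \psi(r)^{2}\, d\varphi^{2}, \qquad \psi(r) = \sqrt{a(R(r))}\; R(r).$$
This shows that $(M,g)$ is a Riemannian model in the sense of the paper.

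Second, I determine how $r$ grows in terms of $R$. For large $R$, $\sqrt{a(R)} = \sqrt{c}\, R^{-l/2}$, and since $l/2 < 1$ the integral defining $r(R)$ diverges as $R \to \infty$ with asymptotic
$$r(R) \simeq \frac{\sqrt{c}}{1 - l/2}\, R^{\,1 - l/2},$$
equivalently $R \simeq r^{\,2/(2-l)}$ for large $r$. The hypothesis $l \in [0,2)$ is exactly what is needed so that the Riemannian infinity corresponds to the euclidean infinity.

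Third, I compute $S_\mu$ directly. From $d\mu = a^{p/2}\, dx$ together with $dx = R^{n-1}\, dR\, d\nu(\varphi)$ and $dR = a^{-1/2}\, dr$, I obtain
$$d\mu = a^{(p-1)/2}(R)\, R^{n-1}\, dr\, d\nu(\varphi),$$
so by (\ref{fubinionmode}) the area function of the weighted model is $S_{\mu}(r) = \nu(\mathbb{S}^{n-1})\, a(R(r))^{(p-1)/2}\, R(r)^{n-1}$. Substituting $a(R) = cR^{-l}$ and $R \simeq r^{\,2/(2-l)}$ for large $r$ yields
$$S_\mu(r) \simeq R^{\,n - 1 - l(p-1)/2} \simeq r^{\,\frac{2(n-1) - l(p-1)}{2-l}},$$
and the elementary identity $\frac{2(n-1) - l(p-1)}{2 - l} = \frac{l(n-p)}{2-l} + (n-1)$, obtained by clearing denominators, matches (\ref{areweightmod}). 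The whole argument is bookkeeping of the change of variables between the two radial coordinates $R$ and $r$; I do not expect any genuine obstacle beyond keeping this distinction straight.
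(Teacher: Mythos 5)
Your proposal is correct and follows essentially the same route as the paper: pass to the Riemannian polar radius $r=\int_0^R\sqrt{a}$, note $R\simeq r^{2/(2-l)}$, and read off the density of $d\mu$ against $dr\,d\nu$ (the paper factors this through the Riemannian measure $dx_g=a^{n/2}dx$, writing $d\mu=a^{-(n-p)/2}dx_g$, whereas you substitute $dR=a^{-1/2}dr$ directly, but the bookkeeping is identical and yields the same exponent). The only cosmetic blemish is the extra $\nu(\mathbb{S}^{n-1})$ factor you insert when identifying $S_\mu$ from (\ref{fubinionmode}), which is harmless since the statement is only up to $\simeq$.
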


\begin{proof}
We have \begin{equation}\label{metricunweight}g=ag_{eucl}=a\left(dR^{2}+R^{2}d\varphi^{2}\right)=dr^{2}+\left(1-\frac{l}{2}\right)^{2}r^{2}d\varphi^{2},\end{equation} where $$r=\int_{0}^{R}\sqrt{a}=r_{0}+\sqrt c\int_{1}^{R}s^{-l/2}ds=\frac{\sqrt c}{1-l/2}R^{1-l/2}$$ for large $R$ and $r_{0}$ is chosen such that $r_{0}=\frac{\sqrt c}{1-l/2}$. 
For the Lebesgue measure $dx_{g}$ on $(M, g)$ we have $$dx_{g}=a^{n/2}dx.$$ Hence, we obtain \begin{equation}\label{weightedmeaex}d\mu=a^{p/2}dx=a^{-\frac{n-p}{2}}dx_{g}.\end{equation} This yields the estimate  $$d\mu\simeq R^{\frac{l(n-p)}{2}}dx_{g}\simeq r^{\frac{l(n-p)}{2-l}}dx_{g}\simeq r^{\frac{l(n-p)}{2-l}+n-1}dr,$$ so that the weighted model $(M, g, \mu)$ has an area function $S_{\mu}$ given by $$S_{\mu}(r)\simeq r^{\frac{l(n-p)}{2-l}+n-1},$$ which is (\ref{areweightmod}).
\end{proof}

\begin{remark}\label{remarkweight}
It follows from (\ref{metricunweight}) that the Riemannian model $(M, g, \textnormal{vol})$ has an area function \begin{equation}\label{arearieman}S(r)=\nu(\mathbb{S}^{n-1})\left(1-\frac{l}{2}\right)^{n-1} r^{n-1}.\end{equation} Thus, (\ref{areweightmod}) implies that the manifold $(M, g, \mu)$ is a weighted model with weight $$h(r)\simeq r^{\frac{l(n-p)}{2-l}}.$$
In particular, in the case $l=0$, we have $(M, g, d\mu)=(\mathbb{R}^{n}, g_{eucl}, dx)$.
\end{remark}

\begin{proposition}\label{weightmodrhoone}
Assume that $n>p$. Let $(M, g, \mu)$ be the weighted model from Lemma \ref{weightedmodelrho} such that there exists some constant $c>0$ so that all for large $R$, $$a(x)=a(R)=c R^{-l},$$ where \begin{equation}\label{rangeoflrho1}l\in\left\{ 
	\begin{array}{ll}
		[0, 2), & \text{if }\theta=\infty, \\ 
		\left(2\frac{n}{p\theta }, 2\right), & \text{if }\theta<\infty,\\%
	\end{array}%
	\right.\end{equation} and $\theta$ is as in (\ref{deftheta}).
Then any non-negative bounded subsolution of $$\partial_{t}u= \Delta _{p}u^{q}\quad \text{in}~M\times \mathbb{R}_{+}$$ has finite extinction time in $M$.
\end{proposition}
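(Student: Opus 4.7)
The plan is to apply Theorem~\ref{thmfinex} with $\rho\equiv 1$ on the weighted model $(M,g,\mu)$ from Lemma~\ref{weightedmodelrho}. Two items must be checked: the weighted Sobolev inequality on $(M,\mu)$, and the integrability condition (\ref{finitecond}) for some admissible $\theta$.

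For the Sobolev inequality, since $n>p$ and $(M,g,\mu)$ is precisely the manifold built from $g=a\,g_{\mathrm{eucl}}$ and $d\mu=a^{p/2}\,dx$, Example~\ref{examplesobeuc} directly yields (\ref{weightsobo}) with Sobolev exponent $\kappa=n/(n-p)$ and weight $\omega=a^{-p/2}$.

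For the integrability condition, I would first exploit the algebraic cancellation
$$\omega\,d\mu=a^{-p/2}\cdot a^{p/2}\,dx=dx,\qquad \frac{\rho}{\omega}=a^{p/2},$$
which reduces (\ref{finitecond}) to the flat Lebesgue tail condition
$$\int_{\mathbb{R}^n}a^{p\theta/2}\,dx<\infty.$$
I then take $\sigma=\sigma_{\min}$ (admissible by (\ref{sigmalarge})) so that $\theta=\theta_{\max}$. If $\theta_{\max}<\infty$, then using $a(R)=cR^{-l}$ for large $R$ together with smoothness and positivity of $a$ on bounded sets, the local integral is finite and the tail
$$\int_1^\infty R^{\,n-1-lp\theta_{\max}/2}\,dR$$
converges iff $l>2n/(p\theta_{\max})$, which is exactly the lower bound in (\ref{rangeoflrho1}). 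If instead $\theta_{\max}=\infty$, (\ref{finitecond}) collapses to $\|a^{p/2}\|_{L^\infty(M)}<\infty$, which is automatic for any $l\in[0,2)$ because $a$ is smooth on $M$ and $a(R)$ tends to $0$ (when $l>0$) or to $c$ (when $l=0$) as $R\to\infty$. Both branches of (\ref{rangeoflrho1}) are thus covered, so Theorem~\ref{thmfinex} applies and furnishes the finite extinction time.

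No serious obstacle is anticipated. The proof is essentially a bookkeeping check; the one point that makes it work is the cancellation $\omega\,d\mu=dx$ between the Sobolev weight produced by Example~\ref{examplesobeuc} and the measure of the model, which turns the abstract weighted integrability (\ref{finitecond}) into a one-line tail computation on $\mathbb{R}^n$ in polar coordinates. The interplay with the case split $\theta_{\max}=\infty$ versus $\theta_{\max}<\infty$ is what forces the precise range of $l$ in (\ref{rangeoflrho1}).
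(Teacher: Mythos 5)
Your proposal is correct and follows essentially the same route as the paper: invoke Example~\ref{examplesobeuc} for the weighted Sobolev inequality with $\omega=a^{-p/2}$, take $\theta=\theta_{max}$, and reduce (\ref{finitecond}) with $\rho\equiv1$ to the convergence of $\int_{\mathbb{R}^{n}}a^{\frac{p}{2}\theta_{max}}\,dx\simeq\int^{\infty}R^{n-1-\frac{lp}{2}\theta_{max}}\,dR$ in the finite case and to boundedness of $a^{p/2}$ in the case $\theta_{max}=\infty$. The only detail the paper mentions that you omit is the (easy) geodesic completeness of $(M,g)$, which is needed to apply Theorem~\ref{thmfinex}.
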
	

\begin{remark}
Let $\kappa$ be defined as in (\ref{k}). Then
\begin{equation*}
	\frac{\kappa }{\kappa -1}=\dfrac{n}{p}
\end{equation*}	
and therefore,  $$\frac{n}{p\theta}<\frac{n}{p}\frac{\kappa-1}{\kappa}= 1,$$ so that the intervals in (\ref{rangeoflrho1}) are non-empty. 

In particular, we have $$\theta=\left\{ 
\begin{array}{ll}
	\dfrac{n}{p-\frac{D(n-p)}{pq}}, & \text{if }p>\frac{D(n-p)}{pq}~\text{and}~ \zeta\leq q+1, \\ 
	\dfrac{n}{p-\frac{D(n-p)}{\zeta-D}}& \text{if}~\zeta\geq \max\left(q+1, \frac{Dn}{p}\right),\\\infty, &\text{else},%
\end{array}%
\right. .$$
and therefore, $$\frac{n}{p\theta }=\left\{ 
\begin{array}{ll}
\dfrac{p-\frac{D(n-p)}{pq}}{p}, & \text{if }p>\frac{D(n-p)}{pq}~\text{and}~ \zeta\leq q+1 \\ \dfrac{p-\frac{D(n-p)}{\zeta-D}}{p}& \text{if}~\zeta\geq \max\left(q+1, \frac{Dn}{p}\right),\\0, &\text{else}.
\end{array}%
\right. $$

For example, we have $\theta=\infty$ if $n$ is large enough.
\end{remark}

\begin{proof} From Lemma \ref{weightedmodelrho} we know that the manifold $(M, g)$ is geodesically complete. It follows from Example \ref{examplesobeuc} that the weighted model $(M, g, \mu)$ satisfies the weighted Sobolev inequality (\ref{weightsobo}) with weight $\omega$ given by $$\omega=\frac{1}{a^{p/2}}.$$ Hence, in order to apply Theorem \ref{thmfinex}, it remains to show that condition (\ref{finitecond}) is satisfied with $\rho\equiv1$, that is, \begin{equation}\label{condrho1}\left|\left|\frac{1}{\omega}\right|\right|_{L^{\theta}(M, \omega d\mu)}<\infty.\end{equation} In the case $\theta<\infty$, we have $$\int_{M}\omega^{1-\theta}d\mu=\int_{\mathbb{R}^{n}}\omega(x)^{-\theta}dx=\int_{\mathbb{R}^{n}}a(x)^{\frac{p}{2}\theta}dx\simeq \int_{R_{0}}^{\infty} R^{-\frac{lp}{2}\theta+n-1}dR.$$ Noticing that for $l$ in (\ref{rangeoflrho1}) we have $$n<\frac{lp}{2}\theta,$$ we deduce (\ref{condrho1}) in this case. On the other hand, for $\theta=\infty$, we see that for large $R$, $$\frac{1}{\omega}=a^{p/2}\simeq R^{-\frac{lp}{2}}$$ is bounded for $l\geq 0$. Therefore, we can apply Theorem \ref{thmfinex}, which yields the claim.	
\end{proof}

\subsection{Cartan-Hadamard manifolds}\label{secCH}

Let $x_{0}\in M$ be some fixed point and denote $V(r)=\textnormal{vol}(B(x_{0}, r))$ and $|x|=d(x, x_{0})$.

\begin{theorem}\label{mainthmCH}
Let $M$ be a Cartan-Hadamard manifold. Let $\theta$ be as in (\ref{deftheta}) and suppose that $\rho$ satisfies \begin{equation}\label{condCHm}\left|\left|\rho\right|\right|_{L^{\theta}(M, d\text{vol})}<\infty.\end{equation} Assume that $n>p$. Then any non-negative bounded subsolution of $$\rho\partial_{t}u= \Delta _{p}u^{q}\quad \text{in}~M\times \mathbb{R}_{+}$$ has finite extinction time in $M$.
\end{theorem}

\begin{proof}
It follows from \cite{hoffman1974sobolev} that on $(M, \textnormal{vol})$ the weighted Sobolev inequality (\ref{weightsobo}) holds with weight $\omega\equiv 1$ and $\kappa=\frac{n}{n-p}$. Hence, the claim follows from Theorem \ref{thmfinex}.
\end{proof}

\begin{example}
Let $M$ be a Cartan-Hadamard manifold and suppose that there exist $\alpha>0$ so that for all large enough $r$, \begin{equation}\label{volch}V(r)\simeq r^{\alpha}.\end{equation} Suppose also that $\rho(x)=\rho(|x|)\simeq |x|^{-l}$ for large $|x|$, where $l\geq0$ is to be chosen. Suppose first that $\theta<\infty$. Setting $S_{r}=\{x\in M:|x|=r\}$ and denoting by $\nu$ the Riemannian measure on $S_{r}$ we have $$\int_{M}\rho^{\theta}d\textnormal{vol}\simeq \int_{r_{0}}^{\infty}\int_{S_{r}}\rho(r)^{\theta}d\nu dV(r)\simeq \int_{r_{0}}^{\infty}r^{-l\theta}dV(r) $$ so that condition (\ref{condCHm}) becomes $$\int_{r_{0}}^{\infty}r^{-l\theta}dV(r)\simeq\left[r^{\alpha-l\theta}\right]_{r_{0}}^{\infty}+\int_{r_{0}}^{\infty}r^{\alpha-l\theta-1}dr<\infty.$$ Therefore, condition (\ref{condCHm}) is implied if \begin{equation}\label{CHml}l>\frac{\alpha}{\theta}.\end{equation} In the case $\theta=\infty$, $\rho$ is bounded for all $l\geq 0$, so that (\ref{condCHm}) also holds in this case.
\end{example}

\begin{remark}\normalfont
When $M=\mathbb{R}^{n}$ we have (\ref{volch}) with $\alpha=n$ so that (\ref{CHml}) becomes $l>\frac{n}{\theta}$.
It is shown in \cite{tedeev2007interface} that the optimal range for $l$ in $\mathbb{R}^{n}$ is
\begin{equation}\label{optrangeRn}
	l \left\{ 
	\begin{array}{ll}
		>\dfrac{p-nD}{1-D}, & \text{if }p\geq nD, \\ 
		\geq 0, & \text{if }p< nD,%
	\end{array}%
	\right.  
\end{equation}%
which matches (\ref{CHml}) with the optimal $\theta$ given by (\ref{thetamax}) if $p\geq nD$. Hence, as described in Remark \ref{remarkweight}, it follows from (\ref{sigmalarge}) that our $\theta<\frac{n(1-D)}{p-nD}$ so that in this case our range for $l$ seems to be non-optimal. On the other hand, in the case when $n$ is large enough (in particular, so that $p< nD$) we get the optimal range $l\geq 0$.
\end{remark}

\subsection{Manifolds of non-negative Ricci curvature}\label{subsecNR}

\begin{proposition}\label{nonnegricci}
Let $(M, g, \textnormal{vol})$ be a manifold with non-negative Ricci curvature and with reverse volume doubling, where the latter means that there exists $\alpha>2$ and $C>0$ such that for all $R\geq r>0$, $$\frac{V(R)}{V(r)}\geq C\left(\frac{R}{r}\right)^{\alpha}.$$ Assume that $n>p$. Suppose that $\rho$ satisfies \begin{equation}\label{condnonricci}\left|\left|\frac{\rho(x)|x|^{\kappa p}}{V(|x|)^{\kappa-1}}\right|\right|_{L^{\theta}\left(M,  \frac{V(|x|)^{\kappa-1}}{|x|^{\kappa p}}d\text{vol}\right)}<\infty,\end{equation} where $\theta$ is as in (\ref{deftheta}) and $\kappa=\frac{n}{n-p}$. Then any non-negative bounded subsolution of $$\rho\partial_{t}u= \Delta _{p}u^{q}\quad \text{in}~M\times \mathbb{R}_{+}$$ has finite extinction time in $M$.
\end{proposition}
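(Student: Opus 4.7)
The plan is to reduce the proposition to a direct application of Theorem \ref{thmfinex}. The crux of the argument is to identify, on $(M, g, \textnormal{vol})$, a weighted Sobolev inequality of the form (\ref{weightsobo}) with Sobolev exponent $\kappa = \frac{n}{n-p}$ and the specific weight
$$\omega(x) = \frac{V(|x|)^{\kappa-1}}{|x|^{\kappa p}}.$$
Granted this inequality, the quotient $\rho/\omega$ becomes $\rho(x)\,|x|^{\kappa p}/V(|x|)^{\kappa-1}$, so hypothesis (\ref{condnonricci}) is exactly the integrability condition (\ref{finitecond}) of Theorem \ref{thmfinex} for this $\omega$. Since $\sigma_{min}$ is controlled through $\theta_{max}$ (as in (\ref{thetamaxdef})), Theorem \ref{thmfinex} applies and yields finite extinction.

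For the weighted Sobolev inequality itself, I would argue as follows. The non-negative Ricci curvature gives, via Bishop--Gromov, that $r \mapsto V(r)/r^n$ is non-increasing, hence volume doubling on all scales; combined with Buser's inequality, this yields a scale-invariant local $(p,p)$-Poincar\'e inequality on every ball. These two ingredients, together with the reverse volume doubling assumption with $\alpha > 2$, are the classical input for passing from local Poincar\'e inequalities to a global weighted Sobolev inequality through an annular decomposition and a discrete Hardy-type summation in the spirit of Minerbe and Saloff-Coste. The exponent $\kappa = n/(n-p)$ emerges because the Ricci lower bound gives access to the Euclidean-type comparison, while the weight $V(|x|)^{\kappa-1}/|x|^{\kappa p}$ captures the defect between actual and maximal volume growth; reverse doubling with $\alpha > 2$ is what guarantees convergence of the summation over annuli.

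With the weighted Sobolev inequality in hand, the remainder of the proof is a one-line invocation of Theorem \ref{thmfinex}, since (\ref{condnonricci}) is manifestly (\ref{finitecond}) for the chosen $\omega$. The main obstacle is locating or justifying the precise weighted Sobolev inequality with exponent $\kappa = n/(n-p)$ (rather than the more common $\alpha/(\alpha-p)$ that one obtains from volume growth alone); the delicate point is that the Ricci curvature assumption, not just polynomial volume growth, is what fixes $n$ as the relevant exponent, and the reverse doubling with $\alpha > 2$ is what makes the weight $\omega$ integrate against test functions without growth pathologies.
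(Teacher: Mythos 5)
Your proposal is correct and follows essentially the same route as the paper: the paper likewise takes the weighted Sobolev inequality with weight $\omega(x)=V(|x|)^{\kappa-1}/|x|^{\kappa p}$ and exponent $\kappa=\frac{n}{n-p}$ from Theorem 0.1 of Minerbe's work (which you sketch rather than cite) and then observes that (\ref{condnonricci}) is exactly (\ref{finitecond}) for this $\omega$, so Theorem \ref{thmfinex} applies.
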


\begin{proof}
It follows from Theorem 0.1 from \cite{minerbe2009weighted} that on this manifold, the weighted Sobolev inequality (\ref{weightsobo}) holds with weight $$\omega(x)=\omega(|x|)=\frac{V(|x|)^{\kappa-1}}{|x|^{\kappa p}}.$$ Hence, the claim follows again from Theorem \ref{thmfinex}.
\end{proof}

\begin{example}
Let $(M, g, \mu)$ be as in Proposition \ref{nonnegricci} and suppose also that there exists $\alpha>0$ so that for all large enough $r$, $$V(r)\simeq r^{\alpha}.$$ Assume also that $\rho(x)=\rho(|x|)\simeq |x|^{-l},$ where $l\geq 0$ is to be chosen. In the case $\theta<\infty$, we have \begin{align*}\int_{M}\rho^{\theta}\left(\frac{r^{ \kappa p}}{V(r)^{\kappa-1}}\right)^{\theta-1}d\textnormal{vol}&\simeq\int_{r_{0}}^{\infty} r^{-l\theta+(\kappa p-\alpha(\kappa-1))(\theta-1)}dV(r)\\&\simeq\left[r^{\alpha-l\theta+(\kappa p-\alpha(\kappa-1))(\theta-1)}\right]_{r_{0}}^{\infty}\\&\quad+\int_{r_{0}}^{\infty}r^{\alpha-l\theta+(\kappa p-\alpha(\kappa-1))(\theta-1)-1}dr.\end{align*} Thus, if  \begin{equation}\label{rangeleuc}l>\frac{\alpha+(\kappa p-\alpha(\kappa-1))(\theta-1)}{\theta}=\alpha-\frac{\kappa(\theta-1)(\alpha-p)}{\theta},\end{equation}the condition (\ref{condnonricci}) is satisfied in this case. On the other hand, we have for large $|x|$, $$\frac{\rho(x)|x|^{\kappa p}}{V(|x|)^{\kappa-1}}\simeq |x|^{-l+\kappa p-\alpha(\kappa-1)},$$ which is bounded if $l\geq \kappa p-\alpha(\kappa-1)$. 
\end{example}

\begin{remark}
In the case when $M=\mathbb{R}^{n}$ we have $\alpha=n$. Then the range (\ref{rangeleuc}) matches the optimal range (\ref{optrangeRn}) when $\theta$ is given by (\ref{thetamax}) if $p\geq nD$. Hence, as described in Remark \ref{optimalrange} our $\theta<\frac{n(1-D)}{p-nD}$, so that in this case our range for $l$ again seems to be non-optimal. On the other hand, we get the optimal range $l\geq0$ in the case when $n$ is large enough (in particular, so that $p< nD$).
\end{remark}

\begin{da} \normalfont
	This article has no associated data.
\end{da}

\bibliographystyle{abbrv}
\bibliography{librarycacc}

\emph{Universit\"{a}t Bielefeld, Fakult\"{a}t f\"{u}r Mathematik, Postfach
	100131, D-33501, Bielefeld, Germany}

\texttt{philipp.suerig@uni-bielefeld.de}
\end{document}